\numberwithin{equation}{section}
\def\ca{{\mathcal A}}
\def\cb{{\mathcal B}}
\def\ch{{\mathcal H}}
\def\cu{{\mathcal U}}
\def\ga{{\mathfrak A}}
\def\bn{{\mathbb N}}
\def\a{\alpha}
\def\p{\pi}
\def\s{\sigma} 
\def\f{\varphi}  
\def\om{\omega}
\newtheorem{thm}{Theorem}[section]
\newtheorem{example}[thm]{Example}
\newtheorem{lem}[thm]{Lemma}
\newtheorem{prop}[thm]{Proposition}
\theoremstyle{definition}
\begin{document}
\title[Quasi-invariant states]
{Quasi-invariant states for the action of compact groups}
\author[Maria Elena Griseta]{Maria Elena Griseta}
\address{Maria Elena Griseta\\
Dipartimento di Matematica\\
Universit\`{a} degli studi di Bari\\
Via E. Orabona, 4, 70125 Bari, Italy}
\email{\texttt{mariaelena.griseta@uniba.it}}

\date{\today}
\begin{abstract}
We analyze a natural $C^*$-algebraic definition of $G$-quasi-invariant states for the automorphic action of a compact group $G$. We prove that, given a $G$-quasi-invariant state with central support, when the action of the group $G$ commutes with the modular group, its GNS representation is equivalent to that of a $G$-invariant state.   

\vskip0.1cm\noindent \\
{\bf Mathematics Subject Classification}: 46L05, 46L30, 37N20.  \\
{\bf Key words}: $C^*$-dynamical systems, quasi-invariant states, invariant states, modular group. 
\end{abstract}

\maketitle
\section{Introduction}
 States invariant under the action of a group $G$ through $*$-automorphism on a $C^*$-algebra $\ga$ are very often considered in various fields of mathematics and physics \cite{BR1, Sakai}.
A larger class of states is associated with quasi-invariant measures, which are an important topic in ergodic theory and for example the natural setting for the construction of type III factors \cite{Wal,T2}. More in details, if $(X,\cb)$ is a measurable space and $G$ 
denotes a group of automorphisms acting on it by
$$
x\in X  \mapsto \a_g(x)\in X\,,\quad g\in G\,,
$$
a measure  $\mu$ on  $(X,\cb)$ is said to be $G$-quasi-invariant if for any  $g \in G$ the transformed measure  $\mu \circ \a_g$ is equivalent to the measure  $\mu$, that is, these measures are absolutely continuous with respect to each other. \\
Recall also that, given $\mu$ and $\nu$ two regular Borel measures on the compact Hausdorff space $X$, the multiplication representations associated to them are equivalent if and only if $\mu$ and $\nu$ are absolutely continuous with respect to each other. Therefore, in this paper, given $G$ a compact group, by $G$-quasi-invariant state we shall mean a state $\om$ on $\ga$ such that, for every $g\in G$, one has
that $\p_{\om\circ\a_g}$ and $\pi_\om$ are unitarily equivalent representations.\\
Our main goal is to prove that, under suitable assumptions, any quasi-invariant state is "absolutely continuous" with respect to an invariant state.\\
We should note that there does not seem to be a generally agreed-upon definition of what a quasi-invariant state should be in noncommutative settings. For example, in \cite{HKK} quasi-invariance was defined in terms of quasi-equivalence of the representations $\p_{\om\circ\a_g}$ and $\pi_\om$.\\
Recently, a different approach was given in \cite{AD}. Here, given $G$ a group of $*$-automorphisms of a $*$-algebra $\ca$, and assuming that for every $g\in G$ there exists the Radon-Nikodym derivative $x_g\in\ca$,  the authors define a state $\f$ on $\ca$ to be $G$-quasi-invariant if 
\begin{equation}
\label{eq:AccDha}
\f(g(a))=\f(x_ga)\,, \quad a\in \ca\,. 
\end{equation}
In the paper mentioned above, the authors assume the existence of the Radon-Nikodym derivative. In these notes, however, we deduce the existence of this positive operator after proving some results.  \\ 
Successively, in \cite{DKY}, several properties of $G$-quasi-invariant states defined in \eqref{eq:AccDha} were studied in depth. Finally, Dharhi and Ricard, in \cite{DR}, characterize $G$-quasi-invariant states with uniformly bounded cocycle $x_g$, in terms of invariant normal faithful states on the $C^*$-algebra $\ca$.\\ 
In these note, we show that, given a $G$-quasi-invariant state $\om$ with central support, when the action of the group $G$ commutes with the modular group, its GNS representation is equivalent to that associated with the composition of $\om$ with the canonical conditional expectation, obtained by averaging the action of the group. To reach this result, we first prove some technical lemmas.   

\section{$G$-quasi-invariant states}

We consider the action of a compact group $G$ through automorphisms of a $C^*$-algebra
$\ga$. In other words, we are given a group homomorphism
$\a: G\rightarrow\textrm{Aut}(\ga)$, \emph{i.e.} $\a_{gh}=\a_g\circ \a_h$ for all $g, h\in G$.
We also need to make some assumptions on the continuity of $\a$. As usually done in the literature, we will work under the hypothesis
that $\a$ is pointwise norm continuous, namely that for every $a\in\ga$  the vector-valued function
$ G\ni g\mapsto \a_g(a)\in\ga$ is continuous, where $\ga$ is thought of as being given with its norm topology.

We recall that a  state $\om$ on $\ga$ is said $G$-invariant if for every $g\in G$ one has
$\om\circ\a_g=\om$.

Throughout this paper by $G$-quasi-invariant state we shall mean a state $\om$ on $\ga$ such that, for every $g\in G$, one has
that $\p_{\om\circ\a_g}$ and $\pi_	\om$ are unitarily equivalent representation.

As is known, the hypotheses we are working under allow us to define a (faithful) condition expectation, $E_G$, from
$\ga$ onto $\ga^G:=\{a\in\ga: \a_g(a)=a\,\,\textrm{for all}\, g\in G\}$. This is obtained by averaging the action of the group, that is
$$E_G(a):=\int_G \a_g(a)\textrm{d}g\, , a\in\ga\, .$$

Let us $\om$ be a $G$-quasi-invariant state. We will always assume that $\om$ has central support.
As is well known, this amounts to asking that the GNS vector $\xi_\om$ is separating for
the von Neumann algebra $\mathcal{R}_\om:=\pi_\om(\ga)''$. Therefore, the von Neumann algebra
$\pi_\om(\ga)''\subset \cb(\ch_\om)$ is in standard form, and we can consider the modular flow
associated with the cyclic and separating vector $\xi_\om$, which we will denote by
$\{\s_\om^t: t\in\mathbb{R}\}$.

\begin{lem}
\label{lem:impl}
Let $(G, \ga, \alpha)$ be a $C^*$-dynamical system, where $G$ is a compact group and
$\ga$ a $C^*$-algebra. For any state $\om$ on $\ga$, consider the following
statements:
\begin{itemize}
\item[(i)] $\pi_{\om\circ E_G}\cong \pi_\om$
\item[(ii)] For every $g\in G$, $\pi_\om\circ \a_g\cong \pi_\om$
\item[(iii)] $\om$ is a G-quasi-invariant state
\end{itemize}
Then $(i)\Rightarrow (ii)\Rightarrow (iii)$.
\end{lem}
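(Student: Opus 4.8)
The plan is to prove the two implications separately. The implication $(ii)\Rightarrow(iii)$ will reduce to identifying the GNS representation of the state $\om\circ\a_g$ with the representation $\pi_\om\circ\a_g$, while $(i)\Rightarrow(ii)$ will rest on the fact that the group action is unitarily implemented in the GNS representation of the invariant state $\om\circ E_G$, after which the equivalence furnished by $(i)$ can simply be transported.

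For $(ii)\Rightarrow(iii)$, I would first observe that, for each fixed $g\in G$, the representation $\pi_\om\circ\a_g$ of $\ga$ on $\ch_\om$ is itself a GNS representation of the state $\om\circ\a_g$. Indeed, since $\a_g$ is an automorphism we have $(\pi_\om\circ\a_g)(\ga)=\pi_\om(\ga)$, so the vector $\xi_\om$ remains cyclic for $\pi_\om\circ\a_g$, while the associated vector state is $\langle\xi_\om,\pi_\om(\a_g(a))\xi_\om\rangle=(\om\circ\a_g)(a)$. By uniqueness of the GNS construction this gives $\pi_{\om\circ\a_g}\cong\pi_\om\circ\a_g$. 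Hence hypothesis $(ii)$, which reads $\pi_\om\circ\a_g\cong\pi_\om$, yields at once $\pi_{\om\circ\a_g}\cong\pi_\om$, and this is precisely the definition of $G$-quasi-invariance, i.e. $(iii)$.

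For $(i)\Rightarrow(ii)$, set $\psi:=\om\circ E_G$. The first step is to check that $\psi$ is $G$-invariant: using $\a_g\circ\a_h=\a_{gh}$ together with the (right-)invariance of the normalized Haar measure one obtains $E_G\circ\a_h=E_G$ for every $h\in G$, whence $\psi\circ\a_h=\psi$. The second, and key, step is that for such a $G$-invariant state the action is unitarily implemented in its GNS representation: for each $g$, since $\psi\circ\a_g=\psi$ and $\xi_\psi$ is again cyclic for $\pi_\psi\circ\a_g$ (because $\a_g$ is surjective), uniqueness of GNS produces a unitary $U_g$ on $\ch_\psi$ fixing $\xi_\psi$ and satisfying $U_g\pi_\psi(a)U_g^{*}=\pi_\psi(\a_g(a))$; equivalently $\pi_\psi\circ\a_g\cong\pi_\psi$. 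Finally I would transport this through $(i)$: if $W:\ch_\om\to\ch_\psi$ is a unitary intertwining $\pi_\om$ with $\pi_\psi=\pi_{\om\circ E_G}$, then conjugation by $W$ gives $\pi_\om\circ\a_g\cong\pi_\psi\circ\a_g\cong\pi_\psi\cong\pi_\om$, which is $(ii)$.

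I expect the only genuinely substantive point to be the unitary implementation of $\a_g$ in the GNS representation of the invariant state $\psi$; the remaining arguments are bookkeeping with the uniqueness of the GNS triple and with the stability of unitary equivalence under composition with the fixed automorphism $\a_g$. Note that here the invariance of $\psi$ is used only one group element at a time, so no joint continuity of $g\mapsto U_g$ is needed, and the central-support assumption introduced earlier plays no role in this lemma.
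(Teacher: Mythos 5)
Your proof is correct and takes essentially the same route as the paper: both implications rest on the GNS-uniqueness fact $\pi_{\f\circ\a_g}\cong\pi_\f\circ\a_g$ together with the invariance $\om\circ E_G\circ\a_g=\om\circ E_G$ coming from the Haar average. Your explicit unitary implementation of $\a_g$ in the GNS representation of $\psi=\om\circ E_G$ is just the paper's step $\pi_{\om\circ E_G}\circ\a_g\cong\pi_{\om\circ E_G\circ\a_g}=\pi_{\om\circ E_G}$ spelled out in detail, and your argument for $(ii)\Rightarrow(iii)$ fills in what the paper dismisses as obvious.
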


\begin{proof}
For the implication  $(i)\Rightarrow (ii)$  note that from  $\pi_{\om\circ E_G}\cong \pi_\om$ one finds
$\pi_{\om\circ E_G}\circ \a_g\cong \pi_\om\circ \a_g$ for all $g\in G$. Since
$\pi_{\om\circ E_G}\circ \a_g\cong \pi_{\om\circ E_G\circ \a_g}=\pi_{\om\circ E_G}$ and $\pi_\om\circ \a_g\cong
\pi_{\om\circ \a_g}$, we have $\pi_{\om\circ E_G}\cong \pi_{\om\circ \a_g}$ for all $g\in G$. The conclusion now follows by transitivity.\\
The implication  $(ii)\Rightarrow (iii)$ is obvious.

\end{proof}

\begin{lem}
\label{lem:extension}
Let $(G, \ga, \alpha)$ be a $C^*$-dynamical system, where $G$ is a compact group and
$\ga$ a $C^*$-algebra. If $\om$ is a $G$-quasi-invariant state, then the action of $G$
can be lifted to  $\pi_\om(\ga)''$ through automorphisms $\{\widetilde{\a_g}: g\in G\}\subset {\rm Aut}(\pi_\om(\ga)'')$ such that 
$$\widetilde{\a_g}(\pi_\om(a))=\pi_\om(\a_g(a))\,,\,\,\,a\in \ga.$$
\end{lem}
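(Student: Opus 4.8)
The plan is to realise each $\a_g$ spatially on $\ch_\om$ by a unitary and then transport it to the bicommutant by conjugation. Concretely, I would first extract from $G$-quasi-invariance, for every $g\in G$, a unitary $V_g\in\cb(\ch_\om)$ satisfying $\pi_\om(\a_g(a))=V_g\,\pi_\om(a)\,V_g^*$ for all $a\in\ga$. Once such a $V_g$ is in hand, the map $\widetilde{\a_g}:=\mathrm{Ad}(V_g)$ is a $*$-automorphism of $\cb(\ch_\om)$ that carries $\pi_\om(\ga)$ onto itself (because $\a_g$ is surjective, $\pi_\om(\a_g(\ga))=\pi_\om(\ga)$), hence carries $\pi_\om(\ga)''$ onto $(\mathrm{Ad}(V_g)\pi_\om(\ga))''=\pi_\om(\ga)''$; its restriction is then the desired automorphism of $\pi_\om(\ga)''$ fulfilling $\widetilde{\a_g}(\pi_\om(a))=\pi_\om(\a_g(a))$.

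To produce $V_g$, I would observe that $\pi_\om\circ\a_g$ is again a cyclic representation of $\ga$ on $\ch_\om$ with cyclic vector $\xi_\om$, whose associated state is $a\mapsto\langle\xi_\om,\pi_\om(\a_g(a))\xi_\om\rangle=\om(\a_g(a))$; thus $(\ch_\om,\pi_\om\circ\a_g,\xi_\om)$ is a GNS triple for $\om\circ\a_g$, and by uniqueness of the GNS construction $\pi_\om\circ\a_g\cong\pi_{\om\circ\a_g}$. On the other hand, quasi-invariance of $\om$ means precisely $\pi_{\om\circ\a_g}\cong\pi_\om$. Chaining the two unitary equivalences yields a unitary $V_g$ on $\ch_\om$ intertwining $\pi_\om\circ\a_g$ with $\pi_\om$, which is exactly the relation $\pi_\om(\a_g(a))=V_g\,\pi_\om(a)\,V_g^*$.

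Finally I would verify that $\widetilde{\a_g}$ is independent of the chosen $V_g$ and that $g\mapsto\widetilde{\a_g}$ is a homomorphism, so that one obtains a genuine lifted action. The relation $\widetilde{\a_g}(\pi_\om(a))=\pi_\om(\a_g(a))$ fixes $\widetilde{\a_g}$ uniquely on the $\sigma$-weakly dense subalgebra $\pi_\om(\ga)$, and since $\widetilde{\a_g}$ is normal (being implemented by a unitary) this determines it on all of $\pi_\om(\ga)''$; hence any two implementing unitaries induce the same automorphism. The homomorphism property then follows by the same density argument, since both $\widetilde{\a_g}\circ\widetilde{\a_h}$ and $\widetilde{\a_{gh}}$ are normal automorphisms agreeing on $\pi_\om(\ga)$, where $\widetilde{\a_g}(\widetilde{\a_h}(\pi_\om(a)))=\pi_\om(\a_g(\a_h(a)))=\pi_\om(\a_{gh}(a))$. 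The only genuinely delicate point is the extraction of the spatial unitary $V_g$ from the abstract unitary equivalence of GNS representations; everything afterwards is a routine application of normality and $\sigma$-weak density. I note that the individual unitaries $V_g$ need not themselves form a representation of $G$ (they are determined only up to a unitary in the commutant $\pi_\om(\ga)'$), yet the automorphisms $\widetilde{\a_g}$ they induce nonetheless compose correctly.
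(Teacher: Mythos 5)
Your proof is correct and takes essentially the same route as the paper's: its one-line proof invokes precisely the composition of the spatial (interpolating) isomorphism furnished by quasi-invariance ($\pi_{\om\circ\a_g}\cong\pi_\om$) with the GNS-uniqueness equivalence $\pi_{\om\circ\a_g}\cong\pi_\om\circ\a_g$, which is exactly the unitary $V_g$ you construct. Your verifications of well-definedness and of the homomorphism property merely fill in details the paper leaves implicit.
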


\begin{proof}
It is a straightforward consequence of the existence of the interpolating isomorphism between 
$\pi_\om(\ga)''$ and $\pi_{\om\circ\a_g}(\ga)''$
and the fact that  $\pi_{\om\circ\a_g}\cong \pi_\om\circ\alpha_g$ for
every $g\in G$.
\end{proof}

\begin{lem}
\label{lem:exp}
Under the same hypotheses as the previous lemma and assuming that $\ga$ is separable, for any
$T\in\mathcal{R}_\om$ and $x\in\ch_\om$ the vector-valued function
$G\ni g\mapsto \widetilde{\a_g}(T)x\in \ch_\om$ is Bochner
integrable. For $T\in\mathcal{R}_\om$, define
$$\widetilde {E_G}[T]x:=\int_G \widetilde{\a_g}(T)x\,{\rm d}g\,, \,\, x\in\ch_\om\,, $$
then $\widetilde{E_G}[T]$ lies in $\mathcal{R}_\om$. \\
Furthermore, the map
$\widetilde{E_G}:\mathcal{R}_\om\rightarrow\mathcal{R}_\om$ is
faithful and normal: $T\in\mathcal{R}_\om$ with $\widetilde {E_G}[T^*T]=0$ implies $T=0$, and
$\varphi\circ\widetilde {E_G}$ is a normal state for every normal state $\varphi$ on $\mathcal{R}_\om$.
\end{lem}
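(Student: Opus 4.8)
The plan is to establish once a single measurability statement for $g\mapsto\widetilde{\a_g}(T)$ and then reuse it for integrability, for the commutation argument, for faithfulness, and for normality. The main obstacle throughout is that continuity of $g\mapsto\widetilde{\a_g}(T)$ is immediate only on the dense subalgebra $\pi_\om(\ga)$, so passing to a general $T\in\mathcal{R}_\om$ requires an approximation argument combined with Pettis' theorem.

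\textbf{Bochner integrability.} Since each $\widetilde{\a_g}$ is a $*$-automorphism of $\mathcal{R}_\om$, it is isometric, so $\|\widetilde{\a_g}(T)x\|\le\|T\|\,\|x\|$ for all $g$; as $G$ carries normalized Haar measure, $g\mapsto\|\widetilde{\a_g}(T)x\|$ is dominated by a constant, hence integrable. Because $\ga$ is separable, $\ch_\om$ is separable, so by Pettis' theorem I would reduce strong measurability of $g\mapsto\widetilde{\a_g}(T)x$ to weak measurability, i.e. to measurability of $g\mapsto\langle\widetilde{\a_g}(T)x,y\rangle$ for each $y$. For $T=\pi_\om(a)$ this function is even continuous, since $\widetilde{\a_g}(\pi_\om(a))=\pi_\om(\a_g(a))$ and $g\mapsto\a_g(a)$ is norm continuous by hypothesis. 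For general $T$ I would invoke Kaplansky density to pick a sequence $\pi_\om(a_n)$ with $\|\pi_\om(a_n)\|\le\|T\|$ and $\pi_\om(a_n)\to T$ $\sigma$-weakly (a sequence suffices because separability of $\ch_\om$ makes the $\sigma$-weak topology metrizable on bounded sets); normality of $\widetilde{\a_g}$ then gives $\widetilde{\a_g}(\pi_\om(a_n))\to\widetilde{\a_g}(T)$ $\sigma$-weakly for each fixed $g$, so $g\mapsto\langle\widetilde{\a_g}(T)x,y\rangle$ is a pointwise limit of continuous functions and is measurable. This yields Bochner integrability, and is the step I expect to be the crux.

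\textbf{Membership in $\mathcal{R}_\om$ and faithfulness.} The norm bound shows $\widetilde{E_G}[T]$ is bounded with $\|\widetilde{E_G}[T]\|\le\|T\|$. To place it in $\mathcal{R}_\om=\pi_\om(\ga)''$ I would use the bicommutant theorem: for $S'\in\mathcal{R}_\om'$ one has $S'\widetilde{\a_g}(T)=\widetilde{\a_g}(T)S'$, and since bounded operators commute with Bochner integrals,
$$S'\widetilde{E_G}[T]x=\int_G S'\widetilde{\a_g}(T)x\,{\rm d}g=\int_G\widetilde{\a_g}(T)S'x\,{\rm d}g=\widetilde{E_G}[T]S'x\,,$$
so $\widetilde{E_G}[T]\in\mathcal{R}_\om''=\mathcal{R}_\om$; the same representation gives positivity and $\widetilde{E_G}[\idd]=\idd$. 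For faithfulness I would compute, for each $x$,
$$\langle\widetilde{E_G}[T^*T]x,x\rangle=\int_G\|\widetilde{\a_g}(T)x\|^2\,{\rm d}g\,,$$
so $\widetilde{E_G}[T^*T]=0$ forces $\widetilde{\a_g}(T)x=0$ for almost every $g$; running $x$ through a countable dense subset of $\ch_\om$ and removing the resulting null set produces one $g$ with $\widetilde{\a_g}(T)=0$, whence $T=0$ by injectivity of $\widetilde{\a_g}$.

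\textbf{Normality.} Fixing a normal state $\varphi$ with $\varphi(S)=\sum_n\langle S\xi_n,\eta_n\rangle$, I would use Fubini (legitimate since $\sum_n\int_G|\langle\widetilde{\a_g}(T)\xi_n,\eta_n\rangle|\,{\rm d}g\le\|T\|\sum_n\|\xi_n\|\,\|\eta_n\|<\infty$) to write $\varphi(\widetilde{E_G}[T])=\int_G(\varphi\circ\widetilde{\a_g})(T)\,{\rm d}g$. Each $\varphi\circ\widetilde{\a_g}$ is a normal state, hence lies in the predual $(\mathcal{R}_\om)_*$ with unit norm, and $g\mapsto\varphi\circ\widetilde{\a_g}$ has constant norm and is weakly measurable, its pairing with any $T$ being $g\mapsto\varphi(\widetilde{\a_g}(T))$, measurable by the argument above. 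Since $(\mathcal{R}_\om)_*$ is separable, Pettis' theorem makes this map Bochner integrable in $(\mathcal{R}_\om)_*$; its integral is again an element of $(\mathcal{R}_\om)_*$ whose value at $T$ is $\int_G(\varphi\circ\widetilde{\a_g})(T)\,{\rm d}g=\varphi(\widetilde{E_G}[T])$. Thus $\varphi\circ\widetilde{E_G}\in(\mathcal{R}_\om)_*$ is normal, and it is a state because $\widetilde{E_G}$ is positive and unital.
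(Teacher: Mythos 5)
Your proof is correct, and for most of the lemma it follows essentially the paper's route: the same norm bound $\|\widetilde{\a_g}(T)x\|\le\|T\|\|x\|$ for integrability, the same bicommutant computation for membership in $\mathcal{R}_\om$, and the same faithfulness argument (a countable dense set $\{x_k\}$, null sets $N_k$, and a point $g\notin\bigcup_k N_k$ with $\widetilde{\a_g}(T)=0$). Two points differ. First, in the measurability step the paper chooses (by density and separability) a bounded sequence $\pi_\om(a_n)\to T$ \emph{strongly} and asserts that $g\mapsto\widetilde{\a_g}(T)x$ is the pointwise norm limit of the continuous functions $g\mapsto\widetilde{\a_g}(\pi_\om(a_n))x$; this tacitly uses that each $\widetilde{\a_g}$, being a normal automorphism, is $\sigma$-strongly continuous on bounded sets, a fact the paper does not spell out. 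You instead use only $\sigma$-weak convergence, for which normality of $\widetilde{\a_g}$ applies immediately, and then upgrade weak measurability to strong measurability via Pettis; this is a mild but genuinely more careful variant. Second, your normality argument is truly different: the paper takes a bounded increasing sequence $T_n\nearrow T$, writes $\varphi(A)=\sum_i\langle Ax_i,x_i\rangle$, and interchanges $\lim_n$, $\sum_i$ and $\int_G$ by monotone convergence, implicitly relying on the sequential monotone criterion for normality (legitimate here since $\ch_\om$ is separable, so $\mathcal{R}_\om$ is countably decomposable); you instead realize $g\mapsto\varphi\circ\widetilde{\a_g}$ as a Bochner-integrable map into the separable predual $(\mathcal{R}_\om)_*$, whose dual is $\mathcal{R}_\om$, and identify $\varphi\circ\widetilde{E_G}$ with its Bochner integral, hence with an element of the predual. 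Your route avoids the sequential characterization of normality altogether and in effect proves the stronger statement that $\widetilde{E_G}$ is a normal map, at the price of a second application of vector-valued integration; the paper's route stays with scalar integrals and elementary convergence theorems, at the price of the implicit reductions just mentioned.
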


\begin{proof}
We start by noting that $\ch_\om$ is a separable Hilbert space. In particular, if 
$T\in\mathcal{R}_\om$, there exist  sequence $\{a_n: n\in\mathbb{N}\}\subset\ga$ such that 
$\pi_\om(a_n)$ strongly converges to $T$, and $\|\pi_\om(a_n)\|\leq M$ for every $n\in\mathbb{N}$. We next define a sequence $\{f_n: n\in\mathbb{N}\}$ of vector-valued functions on $G$
by $f_n(g):=\widetilde{\a_g}(\pi_\om(a_n))x$. Note that each $f_n$ is continuous (when $\ch_\om$ is endowed with the norm topology).
Further, if we set $f(g):= \widetilde{\a_g}(T)x\in \ch_\om$, we have that $f$ is the pointwise norm limit of the sequence $f_n$, and thus
$f$ is measurable.  At this point, to ascertain that $f$ is Bochner-integrable, it is enough to check
$\int_G \|f(g)\| \textrm{d}g<\infty$, which is obvious as $ \|f(g)\|\leq \|T\|\|x\|$ for all $g\in G$.\\
That $\widetilde {E_G}[T]$ sits in $\mathcal{R}_\om$ is a
consequence of von Neumann’s bicommutant theorem. In fact, if $S$ is in $\mathcal{R}_\om’$, for every
$x\in\ch_\om$ we have
\begin{align*}
S\widetilde {E_G}[T]x&=S\int_G \widetilde{\a_g}(T)x\,{\rm d}g=\int_G S \widetilde{\a_g}(T)x\,{\rm d}g
=\int_G  \widetilde{\a_g}(T)Sx\,{\rm d}g\\
&=\widetilde {E_G}[T]Sx\,,
\end{align*}
which shows that $\widetilde {E_G}[T]$ belongs to the double commutant of $\mathcal{R}_\om$, which coincides with
$\mathcal{R}_\om$.\\
Let us now prove that $\widetilde {E_G}$ is faithful.  Let $T\in\mathcal{R}_\om$ with $\widetilde {E_G}[T^*T]=0$.
For every $x\in \ch_\om$, we have $\langle \widetilde {E_G}[T^*T]x, x \rangle=0$, and thus
$\int_G \| \widetilde{\a_g}(T) x\|^2{\rm d}g=0$. Let $\{x_k: k\in\bn\}$ be a dense sequence in
$\ch_\om$ and let $N_k$ be a negligible set such that $\widetilde{\a_g}(T) x_k=0$ if $g\in G\setminus N_k$.
Set $N=\bigcup_{k\in\bn} N_k$. Since $N$ is negligible, $G\setminus N$ is non-empty. Now
if $g\in G\setminus N$, we have $\widetilde{\a_g}(T) x_k=0$ for every $k\in\bn$, that is $\widetilde{\a_g}(T)=0$, and so
$T=0$.\\
Finally, we have to show that
$\varphi\circ\widetilde{E_G}$ is a normal functional if $\varphi$ is normal.
To this aim, let
$\{T_n: n\in\bn\}\subset\mathcal{R}_\om$ be a monotone increasing sequence of positive operators and let $T\in\mathcal{R}_\om$ be
$\sup_n T_n$. 
There is no lack of generality if we assume
$\varphi(A)=\sum_{i=1}^\infty \langle A x_i, x_i \rangle$, for every $A\in\mathcal{R}_\om$,  with $\{x_i: i\in\bn\}$ being a sequence in
$\ch_\om$ such that $\sum_{i=1}^\infty \|x_i\|^2<\infty$. But then we have:
\begin{align*}
\lim_n \varphi(\widetilde E_G[T_n])&=\lim_n\sum_{i=1}^\infty \int_G \langle \widetilde{\a_g}(T_n) x_i, x_i \rangle\,{\rm d}g\\
&=\lim_n \int_G\sum_{i=1}^\infty\langle \widetilde{\a_g}(T_n) x_i, x_i \rangle\,{\rm d}g=\int_G\sum_{i=1}^\infty\langle \widetilde{\a_g}(T) x_i, x_i \rangle\,{\rm d}g\\
&=\varphi(\widetilde E_G[T])\,,
\end{align*}
thanks to the monotone convergence theorem.
\end{proof}

\begin{thm}
\label{thm:main}
Let $(G, \ga, \alpha)$ be a $C^*$-dynamical system, where $G$ is a compact group and
$\ga$ a separable $C^*$-algebra. If $\om$ is a $G$-quasi-invariant state  on  $\ga$ with central support
such that $\widetilde{\a_g}\circ\s_\om^t=\s_\om^t\circ\widetilde{\a_g}$ for all $g\in G$
and $t\in\mathbb{R}$, then one has 
$$\pi_{\om\circ E_G}\cong \pi_\om\ .$$\\
In particular, conditions (i), (ii) and (iii) in Lemma \ref{lem:impl} are equivalent.
\end{thm}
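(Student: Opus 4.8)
The plan is to realize $\om\circ E_G$ as the pullback through $\pi_\om$ of a faithful normal state on $\mathcal{R}_\om$, and then to exploit the standard form of $\mathcal{R}_\om$ to implement that state by a cyclic and separating vector. Concretely, I would first introduce the functional $\widetilde\om:=\langle\widetilde{E_G}[\,\cdot\,]\xi_\om,\xi_\om\rangle$ on $\mathcal{R}_\om$. By Lemma \ref{lem:exp} the averaging map $\widetilde{E_G}$ takes values in $\mathcal{R}_\om$ and $\varphi\circ\widetilde{E_G}$ is normal whenever $\varphi$ is; applying this to the faithful normal vector state $\varphi=\langle\,\cdot\,\xi_\om,\xi_\om\rangle$ shows $\widetilde\om$ is a normal state. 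Its faithfulness is the crucial point: if $\widetilde\om(T^*T)=0$ then $\langle\widetilde{E_G}[T^*T]\xi_\om,\xi_\om\rangle=0$, and since $\xi_\om$ is separating (central support) while $\widetilde{E_G}[T^*T]\ge 0$, we get $\widetilde{E_G}[T^*T]=0$, whence $T=0$ by the faithfulness of $\widetilde{E_G}$ proved in Lemma \ref{lem:exp}.

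Next I would establish the factorization $\om\circ E_G=\widetilde\om\circ\pi_\om$. This reduces to the identity $\pi_\om(E_G(a))=\widetilde{E_G}[\pi_\om(a)]$, obtained by pulling the norm-contractive map $\pi_\om$ through the Bochner integral defining $E_G$ (legitimate because $g\mapsto\a_g(a)$ is norm-continuous) and then rewriting $\pi_\om(\a_g(a))=\widetilde{\a_g}(\pi_\om(a))$ via Lemma \ref{lem:extension}. The commutation hypothesis $\widetilde{\a_g}\circ\s_\om^t=\s_\om^t\circ\widetilde{\a_g}$ enters at this stage to guarantee that $\widetilde{E_G}$ is compatible with the modular flow, equivalently that its range is a $\s_\om^t$-invariant subalgebra of $\mathcal{R}_\om$, so that the averaged map interacts correctly with the modular structure attached to $\xi_\om$.

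Finally, since $\om$ has central support, $\mathcal{R}_\om$ acting on $\ch_\om$ is in standard form, so the faithful normal state $\widetilde\om$ is implemented by a unique vector $\eta$ in the natural cone, and faithfulness forces $\eta$ to be cyclic and separating for $\mathcal{R}_\om$. Then $\om\circ E_G(a)=\langle\pi_\om(a)\eta,\eta\rangle$, so the GNS representation of $\om\circ E_G$ is the subrepresentation of $\pi_\om$ generated by $\eta$; as $\eta$ is cyclic for $\mathcal{R}_\om$ and $\pi_\om(\ga)$ is strongly dense in $\mathcal{R}_\om$ (Kaplansky), one has $\overline{\pi_\om(\ga)\eta}=\ch_\om$, and the canonical map $\pi_{\om\circ E_G}(a)\xi_{\om\circ E_G}\mapsto\pi_\om(a)\eta$ extends to a unitary intertwining $\pi_{\om\circ E_G}$ with $\pi_\om$, giving $\pi_{\om\circ E_G}\cong\pi_\om$. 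Together with Lemma \ref{lem:impl} this closes the cycle $(iii)\Rightarrow(i)$ and yields the equivalence of $(i)$, $(ii)$, $(iii)$. I expect the main obstacle to be precisely the faithfulness of $\widetilde\om$ and the clean extraction of the cyclic separating vector $\eta$ from the standard form — that is, ensuring that averaging over $G$ does not degenerate the state — which is where the faithfulness clause of Lemma \ref{lem:exp} and the modular commutation do the real work.
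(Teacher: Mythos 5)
Your proposal is correct, but it takes a genuinely different route from the paper's. The paper uses the commutation hypothesis $\widetilde{\a_g}\circ\s_\om^t=\s_\om^t\circ\widetilde{\a_g}$ to prove that the faithful normal state $\varphi_\om:=\widetilde{\om}\circ\widetilde{E_G}$ (your $\widetilde{\om}$ --- note a notational clash: in the paper $\widetilde{\om}$ denotes the vector-state extension of $\om$) is invariant under the modular flow of $\xi_\om$; this invariance is exactly the precondition for the Pedersen--Takesaki Radon--Nikodym theorem, which yields a positive operator $H$ affiliated with the centralizer with $\varphi_\om=\widetilde{\om}(H\,\cdot\,)$. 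The intertwiner is then constructed explicitly as $\pi_{\om\circ E_G}(a)\xi_{\om\circ E_G}\mapsto\pi_\om(a)H^{\frac{1}{2}}\xi_\om$, and its surjectivity requires a separate unbounded-operator analysis: non-singularity of $H$ (Lemma \ref{lem:nonsingular}), affiliation of the inverse (Lemma \ref{lem:affiliation}), and cyclicity of $H^{\frac{1}{2}}\xi_\om$ (Proposition \ref{prop:isom}). You bypass all of this by implementing $\varphi_\om$ with the unique vector $\eta$ in the natural cone of $(\mathcal{R}_\om,\xi_\om)$ and invoking the standard facts that every normal state on an algebra in standard form has such a representative and that cone vectors are cyclic if and only if separating (see e.g.\ \cite{BR1}, Prop.\ 2.5.30 and Thm.\ 2.5.31); combined with the factorization $\om\circ E_G=\varphi_\om\circ\pi_\om$ and uniqueness of the GNS triple, this gives the unitary directly. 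Your route is shorter and avoids unbounded operators entirely, at the price of invoking the heavier natural-cone machinery; what the paper's route buys is an explicit Radon--Nikodym derivative $H$ of $\om\circ E_G$ with respect to $\om$, which ties in with the cocycle picture of quasi-invariance in \cite{AD,DR} emphasized in the introduction.

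One assertion in your write-up should be corrected: you claim the commutation hypothesis ``enters'' in establishing $\pi_\om(E_G(a))=\widetilde{E_G}[\pi_\om(a)]$. It does not --- that identity needs only Lemma \ref{lem:extension}, boundedness of $\pi_\om$, and norm continuity of $g\mapsto\a_g(a)$ --- and in fact no step of your argument uses the hypothesis at all. This is not a gap: unused hypotheses do not invalidate a proof, and the natural-cone implementation theorem, unlike Pedersen--Takesaki, requires no modular invariance of the state. But it means your argument actually proves the stronger statement that quasi-invariance, central support, and separability alone yield $\pi_{\om\circ E_G}\cong\pi_\om$; you should state this explicitly (after double-checking the faithfulness of $\widetilde{E_G}$ from Lemma \ref{lem:exp}, on which everything rests) rather than gesture at a role the modular hypothesis does not play in your argument.
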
 

\begin{proof}
Denote by $\widetilde{\om}$ the unique extension of $\om$ to $\mathcal{R}_\om$, that is
 $\widetilde{\om}(T)=\langle T\xi_\om, \xi_\om \rangle$, $T\in\mathcal{R}_\om$.
Let $\varphi_\om$ be the state on $\mathcal{R}_\om$ obtained as the composition
$\varphi_\om:=\widetilde{\om}\circ\widetilde{E_G}$, where
$\widetilde{E_G}$ is the map obtained above. Note that $\varphi_\om$ is a faithful normal state on
$\mathcal{R}_\om$ thanks to   Lemma \ref{lem:exp}.\\
We next show that $\varphi_\om\circ\s_\om^t=\varphi_\om$ for all  $t$. Since $\widetilde{\om}$ is clearly 
invariant under the action of its modular flow, the  equality we want to prove certainly holds if one shows that 
$\widetilde{E_G}\circ\s_\om^t=\s_\om^t\circ\widetilde{E_G}$ for all $t$. This equality is a matter of easy computation.
Indeed, for $T\in\mathcal{R}_\om$ and $x\in\ch_\om$, one has
\begin{align*}
\widetilde{E_G}(\s_\om^t(T))x=&\int_G \widetilde{\a_g}(\s_\om^t(T))x\,{\rm d} g=
\int_G\s_\om^t(\widetilde{\a_g}(T))x\,{\rm d} g\\
=&\int_G \Delta_\om^{it}\widetilde{\a_g}(T)\Delta_\om^{-it}x\,{\rm d} g=\Delta_\om^{it}\int_G\widetilde{\a_g}(T)\Delta_\om^{-it}x\,{\rm d} g\\
=& \Delta_\om^{it} \widetilde{E_G}(T)\Delta_\om^{-it}x=\s_\om^t(\widetilde{E_G}(T))x\,,
\end{align*}
which proves the sought equality because both $T\in\mathcal{R}_\om$ and $x\in\ch_\om$
are arbitrary.\\
Because $\varphi_\om$ is invariant under the action of the modular flow, we are in a position to apply the so-called Pedersen-Takesaki-Radon-Nikodym theorem (see \emph{e.g.} Theorem 4.10 in \cite{S} or Theorem 2.2.21 in \cite{CS}). Therefore, there must exist a (possibly unbounded) positive operator $H$, defined
on a dense subspace $D(H)\subseteq \ch_\om$, such that $H$ is affiliated with the centralizer of $\widetilde{\om}$ and
$$
\varphi_\om(T)=\widetilde{\om}(HT)\,, \,\, T\in\mathcal{R}_\om\, .
$$
Note that the above equality implies that $\pi_\om(a)\xi_\om$ sits in the domain of $H$ for every $a\in\ga$. In particular,
the GNS vector $\xi_\om$ belongs to the domain of $H$, and \emph{a fortiori} it belongs to the domain of   $H^\frac{1}{2}$ as well. We shall consistently refer to $H$ as the Radon-Nikodym derivative of $\varphi_\om$ with respect to $\om$.\\
We can now get to the conclusion.
Define $W_0$ on the dense subpace $\pi_{\om\circ E_G}(\ga)\xi_{\om\circ E_G}\subset \ch_{\om\circ E_G}$ setting
$$W_0 \pi_{\om\circ E_G}(a)\xi_{\om\circ E_G}:=\pi_\om(a) H^\frac{1}{2}\xi_\om\,,\,\, a\in\ga\,,$$
where $H$ is the  Radon-Nikodym derivative considered above.\\
We next show that $W_0$ is well-defined and isometric. To this end, let us denote by 
$E_n$ the spectral projection of $H^\frac{1}{2}$ corresponding to the interval $[0, n]$. Then
for every $a\in \ga$ we have:
\begin{align*}
&\|\pi_\om(a) H^\frac{1}{2}\xi_\om\|^2= \langle \pi_\om(a) H^\frac{1}{2}\xi_\om, \pi_\om(a) H^\frac{1}{2}\xi_\om\rangle=
 \langle \pi_\om(a^*a) H^\frac{1}{2}\xi_\om, H^\frac{1}{2}\xi_\om\rangle=\\
&\lim_{n\rightarrow\infty} \langle E_n\pi_\om(a^*a) H^\frac{1}{2}\xi_\om, H^\frac{1}{2}\xi_\om\rangle
=\lim_{n\rightarrow\infty} \langle H^\frac{1}{2}E_n\pi_\om(a^*a) H^\frac{1}{2}\xi_\om, \xi_\om\rangle=\\
&\lim_{n\rightarrow\infty} \langle H E_n\pi_\om(a^*a) \xi_\om, \xi_\om\rangle= \langle H\pi_\om(a^*a) \xi_\om, \xi_\om\rangle= \\
&\om\circ E_G(a^*a)=\|\pi_{\om\circ E_G}(a)\xi_{\om\circ E_G}\|^2.
\end{align*}
Denote by $W$ the extension by continuity of $W_0$. The above computations show that $W$ is a linear isometry
of $\ch_{\om\circ E_G}$ into $\ch_\om$. \\
We need to show that $W$ intertwines $\pi_{\om\circ E_G}$  and
$\pi_\om$, that is $W\pi_{\om\circ E_G}(a)=\pi_\om(a) W$ for all $a\in\ga$. It is enough to check the sought equality on the dense subspace $\{\pi_{\om\circ E_G}(b)\xi_{\om\circ E_G}: b\in \ga\}$. But for every $b\in\ga$ one has
\begin{align*}
W\pi_{\om\circ E_G}(a)\pi_{\om\circ E_G}(b)\xi_{\om\circ E_G}&=W\pi_{\om\circ E_G}(ab)\xi_{\om\circ E_G}\\
&=\pi_\om(ab)H^\frac{1}{2}\xi_\om=
\pi_\om(a)\pi_\om(b)H^\frac{1}{2}\xi_\om\\
&=\pi_\om(a) W\pi_{\om\circ E_G}(b)\xi_{\om\circ E_G},
\end{align*}
and we are done. \\
All that is left to do is prove that $W$ is surjective. This is somewhat technical, and the proof is done separately to keep the present
one to a reasonable length.
\end{proof}

The strategy to prove that $W$ is a unitary is to show that $H^\frac{1}{2}\xi_\om$ is still a cyclic vector for $\pi_\om(\ga)$. To do so, we need a couple of preliminary results. 
We recall that a closed densely defined operator $T$ is
said to be non-singular if $T$ is injective, that is, if $\textrm{Ker}(T):=\{x\in D(T): Tx=0\}$ is trivial.  If we assume
that $T$ is a self-adjoint operator, then injectivity of $T$ is clearly the same as asking that $T$ has a dense range.\\

The following general lemma should be a well-known result. Nevertheless, we do provide a detailed proof as it is not easily found in the literature.

\begin{lem}
\label{lem:affiliation}
If $T: D(T)\subset \ch\rightarrow\ch$ is a non-singular self-adjoint operator, then its inverse
$T^{-1}$, defined on its natural domain $D(T^{-1})={\rm Ran}(T)$ is a self-adjoint operator.
Moreover, if $T$ is affiliated to a von Neumann algebra $\mathcal{R}$, then $T^{-1}$ is also
affiliated to $\mathcal{R}$.
\end{lem}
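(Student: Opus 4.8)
The plan is to treat the two assertions in turn, first establishing self-adjointness of $T^{-1}$ and then its affiliation to $\mathcal{R}$. To begin, I would record that injectivity of the self-adjoint operator $T$ forces $T$ to have dense range: since $T=T^*$ one has $\overline{{\rm Ran}(T)}={\rm Ker}(T^*)^\perp={\rm Ker}(T)^\perp=\ch$, the last equality being precisely the non-singularity hypothesis. Consequently $T^{-1}$, which is well defined on ${\rm Ran}(T)$ by injectivity, is densely defined; it is moreover closed, being the inverse of a closed injective operator. A one-line computation shows $T^{-1}$ is symmetric: writing $u=Tx$, $v=Ty$ with $x,y\in D(T)$, one gets $\langle T^{-1}u,v\rangle=\langle x,Ty\rangle=\langle Tx,y\rangle=\langle u,T^{-1}v\rangle$.

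The substantive point is to upgrade symmetry to self-adjointness, that is, to prove $D((T^{-1})^*)={\rm Ran}(T)$. The cleanest route is through the spectral theorem: writing $T=\int_{\mathbb{R}}\lambda\,{\rm d}E(\lambda)$, non-singularity is equivalent to $E(\{0\})=0$, so that $f(\lambda):=\lambda^{-1}$ is a real-valued Borel function defined $E$-almost everywhere. The Borel functional calculus then yields that $f(T)=\int_{\mathbb{R}}\lambda^{-1}\,{\rm d}E(\lambda)$ is self-adjoint, and a direct check of domains — using $\|E({\rm d}\lambda)Tx\|^2=\lambda^2\|E({\rm d}\lambda)x\|^2$ — shows $D(f(T))={\rm Ran}(T)$ and $f(T)=T^{-1}$ there. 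As an alternative one may bypass the spectral theorem and invoke the general duality $(T^{-1})^*=(T^*)^{-1}$, valid for any densely defined closed operator that is injective with dense range; since $T^*=T$ this gives $(T^{-1})^*=T^{-1}$ at once.

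For the affiliation I would argue directly from the definition: $T$ is affiliated to $\mathcal{R}$ precisely when $UTU^*=T$, equivalently $UD(T)=D(T)$ and $UTx=TUx$ for $x\in D(T)$, for every unitary $U$ in the commutant $\mathcal{R}'$. Fixing such a $U$ and taking $u=Tx\in{\rm Ran}(T)$, one finds $Uu=UTx=TUx\in{\rm Ran}(T)$, so $U$ maps $D(T^{-1})$ into itself, while $T^{-1}Uu=T^{-1}TUx=Ux=UT^{-1}u$. Applying the same reasoning to the unitary $U^*\in\mathcal{R}'$ gives the reverse inclusion $U^*D(T^{-1})\subseteq D(T^{-1})$, hence $UD(T^{-1})=D(T^{-1})$; combined with the intertwining relation this is exactly $UT^{-1}U^*=T^{-1}$, so $T^{-1}$ is affiliated to $\mathcal{R}$. (If one has already set up the spectral picture, this is even more immediate, since the spectral projections of $T^{-1}$ form a subfamily of those of $T$, which lie in $\mathcal{R}$ by assumption.)

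I expect the only delicate step to be the self-adjointness claim, specifically the verification that the natural domain ${\rm Ran}(T)$ coincides with $D((T^{-1})^*)$; everything else is formal. The spectral-calculus computation of $D(f(T))$ handles this cleanly once $E(\{0\})=0$ is used to guarantee that $\lambda^{-1}$ is defined $E$-almost everywhere, and the affiliation part reduces to the single commutation check above.
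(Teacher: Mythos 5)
Your proposal is correct and takes essentially the same approach as the paper: self-adjointness of $T^{-1}$ via the spectral theorem (you use the projection-valued-measure/functional-calculus form where the paper uses the multiplication-operator model on $L^2(X,\mu)$, but this is the same argument), and affiliation via the direct commutation check $u=Tx\mapsto T^{-1}Uu=UT^{-1}u$ for unitaries $U\in\mathcal{R}'$, which is essentially the paper's computation (the paper proves $UT^{-1}\subseteq T^{-1}U$, while you additionally apply $U^*$ to get equality of domains; the two formulations of affiliation are equivalent). Your side remarks — density of ${\rm Ran}(T)$, closedness and symmetry of $T^{-1}$, and the shortcut $(T^{-1})^*=(T^*)^{-1}$ — are correct but do not constitute a genuinely different route.
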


\begin{proof}
The first part of the statement is an easy consequence of the spectral theorem. Precisely, up to unitary
equivalence we can suppose that $\ch= L^2(X, \mu)$ for some finite measure space $(X, \mathfrak{S}, \mu)$, and
that $T$ is the multiplication operator by some real-valued measurable function $h: X\rightarrow \bar{\mathbb{R}}$ which is finite almost everywhere, that is $\mu(\{x\in X: h(x)=\pm \infty\})=0$. The non-singularity of $T$ says that the set $\{x\in X: h(x)=0\}$ is $\mu$-negligible. Therefore, the function $\frac{1}{h}$ is finite almost everywhere w.r.t. $\mu$, which means $T^{-1}$ is still a self-adjoint operator, being in turn a multiplication operator
by a function which is finite almost everywhere.\\
As for the second part of the statement, set $K:=T^{-1}$ and let $U$ be a unitary in $\mathcal{R}'$. We need to show that
$UK\subseteq KU$, that is $x\in D(K)$ implies $Ux\in D(K)$ and $UKx =KUx$. If we define $y:=Kx$, then
$y$ lies in $D(T)$. Since $T$ is affiliated with $\mathcal{R}$ by hypothesis, we have $UT\subseteq TU$. In particular,
$Uy$ is still in $D(T)$ and $UTy =TUy$. From this last equality we see that $Ux=TUy$, so $Ux$ is in 
$D(K)={\rm Ran}(T)$, and $KUx=KTUy=Uy=UKx$.
\end{proof}

\begin{lem}
\label{lem:nonsingular}
The operator $H$ defined as the Radon-Nikodym derivative of $\varphi_\om$ w.r.t $\om$ is non-singular.
\end{lem}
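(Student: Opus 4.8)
The plan is to exploit the faithfulness of $\varphi_\om$ together with the defining identity $\varphi_\om(T)=\widetilde{\om}(HT)$ furnished by the Pedersen–Takesaki theorem. Recall that $H$ is a positive self-adjoint operator affiliated with the centralizer of $\widetilde{\om}$; in particular all its spectral projections lie in $\mathcal{R}_\om$. Let $P$ denote the spectral projection $\chi_{\{0\}}(H)$, i.e. the orthogonal projection onto $\overline{\textrm{Ker}(H)}$. Since $H$ is self-adjoint, showing that it is non-singular is equivalent to showing $P=0$. Thus the whole argument reduces to computing $\varphi_\om(P)$ and checking it vanishes.

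To evaluate $\varphi_\om(P)$ I would first record that $HP=0$. This is clearest through the bounded truncations $H_n:=H\,\chi_{[0,n]}(H)$, which are the operators implicit in the Pedersen–Takesaki formula: because $P$ is itself a function of $H$, it commutes with every $H_n$, and from $\chi_{[0,n]}\,\chi_{\{0\}}=\chi_{\{0\}}$ together with $H\,\chi_{\{0\}}(H)=0$ one gets $H_n P = H_n^{1/2}PH_n^{1/2}=0$ for every $n$. Using the precise meaning of the Radon–Nikodym identity, namely $\varphi_\om(T)=\lim_n\widetilde{\om}(H_n^{1/2}TH_n^{1/2})$ for positive $T\in\mathcal{R}_\om$, and specializing to $T=P$, each term of the approximating sequence vanishes, so $\varphi_\om(P)=0$.

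The conclusion is then immediate: $\varphi_\om$ is a faithful normal state on $\mathcal{R}_\om$ (as established just after its definition in the proof of Theorem~\ref{thm:main}, via Lemma~\ref{lem:exp}), so $\varphi_\om(P)=0$ with $P=P^{*}P\ge 0$ forces $P=0$. Hence $\textrm{Ker}(H)=\{0\}$ and $H$ is non-singular. I expect the only genuinely delicate point to be the interpretation of $\varphi_\om(T)=\widetilde{\om}(HT)$ for the bounded but non-trivial operator $P$ when $H$ is unbounded: one must pass through the truncations $H_n$ and invoke the exact form of the Pedersen–Takesaki–Radon–Nikodym statement rather than manipulate $HP$ formally. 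Once the commutation $[P,H_n]=0$ and the identity $H_nP=0$ are in place, the faithfulness of $\varphi_\om$ closes the argument at once.
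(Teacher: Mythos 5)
Your proposal is correct and follows essentially the same route as the paper: both arguments reduce non-singularity to showing that the projection onto $\mathrm{Ker}(H)$, which lies in $\mathcal{R}_\om$ because $H$ is affiliated with the centralizer of $\widetilde{\om}$, is annihilated by $\varphi_\om$, and then invoke the faithfulness of $\varphi_\om=\widetilde{\om}\circ\widetilde{E_G}$. The only difference is cosmetic: the paper evaluates $\varphi_\om(E)=\langle HE\xi_\om,\xi_\om\rangle=0$ directly (rigorous here since $E\xi_\om\in\mathrm{Ker}(H)$ gives $HE\xi_\om=0$), whereas you pass through the truncations $H_n$ to interpret $\widetilde{\om}(HT)$ — a slightly more careful but equivalent handling of the unbounded $H$.
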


\begin{proof}
It is a consequence of the faithfulness of $\varphi_\om$. Indeed, if $H$ failed to be injective, then the projection
$E$ onto ${\rm Ker}(H)$ would be a non-zero positive element of $\mathcal{R}_\om= \pi_\om(\ga)''$, and we would have
$\varphi_\om(E)=\langle HE\xi_\om,\xi_\om \rangle=0$.
\end{proof}

We are now ready to prove the announced result.
\begin{prop}
\label{prop:isom}
The isometry $W$ is surjective and thus unitary. 
\end{prop}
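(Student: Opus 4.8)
The plan is to reduce surjectivity of $W$ to a cyclicity statement and then exploit the non-singularity of $H$. Since $\pi_\om(\ga)$ is strongly dense in $\mathcal{R}_\om$, the range of $W$ is exactly the closed subspace $\overline{\pi_\om(\ga)H^{1/2}\xi_\om}=\overline{\mathcal{R}_\om H^{1/2}\xi_\om}$, so $W$ is surjective precisely when $H^{1/2}\xi_\om$ is cyclic for $\mathcal{R}_\om$. I would therefore let $p\in\cb(\ch_\om)$ be the orthogonal projection onto this cyclic subspace; being the projection onto a subspace of the form $\overline{\mathcal{R}_\om\eta}$ for a von Neumann algebra $\mathcal{R}_\om$, it sits in the commutant $\mathcal{R}_\om'$, and the goal becomes to prove $p=\idd$.

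The key tool is the inverse $H^{-1/2}$. By Lemma \ref{lem:nonsingular} the operator $H$ is non-singular, hence so is the positive self-adjoint operator $H^{1/2}$; since $H$ is affiliated with the centralizer of $\widetilde{\om}$, and a fortiori with $\mathcal{R}_\om$, functional calculus gives that $H^{1/2}$ is affiliated with $\mathcal{R}_\om$, and Lemma \ref{lem:affiliation} then yields that $H^{-1/2}=(H^{1/2})^{-1}$ is a self-adjoint operator still affiliated with $\mathcal{R}_\om$. The affiliation of $H^{-1/2}$ with $\mathcal{R}_\om$ translates, for the projection $p\in\mathcal{R}_\om'$, into the inclusion $pH^{-1/2}\subseteq H^{-1/2}p$, which is the mechanism I would use to transport information from $H^{1/2}\xi_\om$ back to $\xi_\om$.

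The decisive computation then runs as follows. Recall from the proof of Theorem \ref{thm:main} that $\xi_\om\in D(H^{1/2})$, so that $y:=H^{1/2}\xi_\om$ is a well-defined vector lying in $\mathrm{Ran}(H^{1/2})=D(H^{-1/2})$, with $H^{-1/2}y=\xi_\om$ by non-singularity. By construction $y$ belongs to the range of $p$, whence $py=y$. Applying the inclusion $pH^{-1/2}\subseteq H^{-1/2}p$ to $y$ gives $p\xi_\om=pH^{-1/2}y=H^{-1/2}(py)=H^{-1/2}y=\xi_\om$, so that $\xi_\om$ itself lies in $\mathrm{Ran}(p)$. Since $p$ commutes with $\mathcal{R}_\om$, one then has $T\xi_\om=Tp\xi_\om=pT\xi_\om\in\mathrm{Ran}(p)$ for every $T\in\mathcal{R}_\om$; as $\xi_\om$ is cyclic for $\mathcal{R}_\om$, the closed subspace $\mathrm{Ran}(p)$ contains $\overline{\mathcal{R}_\om\xi_\om}=\ch_\om$, forcing $p=\idd$. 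This shows $H^{1/2}\xi_\om$ is cyclic, hence $W$ is surjective and therefore unitary.

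I expect the main obstacle to be the careful bookkeeping of domains for the unbounded operators $H^{\pm 1/2}$ and, above all, the justification that a projection in $\mathcal{R}_\om'$ commutes with the \emph{unbounded} operator $H^{-1/2}$ in the precise sense $pH^{-1/2}\subseteq H^{-1/2}p$; this is exactly what Lemmas \ref{lem:affiliation} and \ref{lem:nonsingular} were set up to provide, so that the remaining analytic difficulty reduces to a clean application of affiliation.
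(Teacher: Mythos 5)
Your proof is correct, and while it rests on exactly the same two pillars as the paper --- non-singularity of $H$ (Lemma \ref{lem:nonsingular}) and affiliation of the inverse to $\mathcal{R}_\om$ (Lemma \ref{lem:affiliation}) --- the closing argument is genuinely different. The paper proceeds by explicit approximation: given $x\in\ch_\om$ and $\varepsilon>0$, it picks $A\in\mathcal{R}_\om$ with $\|A\xi_\om-x\|<\varepsilon/2$, introduces the spectral projections $E_n$ of $T^{-1}$ (with $T=H^{1/2}$) for $[0,n]$, forms the bounded operators $B_n:=AE_nT^{-1}\in\mathcal{R}_\om$, and uses $E_n\to\idd$ strongly to get $\|B_{n_0}T\xi_\om-x\|\leq\varepsilon$; cyclicity of $H^{1/2}\xi_\om$ is thus established vector by vector, with quantitative bookkeeping. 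You instead run a qualitative commutant-projection argument: the projection $p\in\mathcal{R}_\om'$ onto $\overline{\mathcal{R}_\om H^{1/2}\xi_\om}$ satisfies $pH^{-1/2}\subseteq H^{-1/2}p$ by affiliation, and feeding it the vector $y=H^{1/2}\xi_\om$ (legitimate, since the proof of Theorem \ref{thm:main} shows $\xi_\om\in D(H)\subseteq D(H^{1/2})$) yields $p\xi_\om=\xi_\om$, whence $p=\idd$ by cyclicity of $\xi_\om$. Your route is shorter and concentrates all the unbounded-operator care into the single standard fact that elements of $\mathcal{R}_\om'$ intertwine operators affiliated to $\mathcal{R}_\om$ --- which, as you anticipate, is routine: either commute $p$ through the spectral projections of $H^{-1/2}$, which lie in $\mathcal{R}_\om$, or write $p=\frac{1}{2}\bigl((2p-\idd)+\idd\bigr)$ with $2p-\idd$ a unitary in $\mathcal{R}_\om'$ and apply the definition of affiliation directly. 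The paper's version buys self-containedness (it needs affiliation only through the concrete bounded cutoffs $E_nT^{-1}\in\mathcal{R}_\om$ and a strong-convergence estimate), at the cost of the $\varepsilon$-chasing; yours buys brevity and makes transparent the structural reason the result holds, namely that the cyclic subspace generated by $H^{1/2}\xi_\om$ is $\mathcal{R}_\om$-invariant and already contains $\xi_\om$. One cosmetic point: you should record explicitly that $\textrm{Ker}(H^{1/2})=\textrm{Ker}(H)$, so that Lemma \ref{lem:nonsingular} indeed transfers non-singularity from $H$ to $H^{1/2}$; this is immediate for a positive self-adjoint operator, and your parenthetical ``hence so is $H^{1/2}$'' is correct as stated.
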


\begin{proof}
Since the range of $W$ contains the subspace $\{\pi_\om(a) H^\frac{1}{2}\xi_\om: a\in\ga\}$, it is enough
to prove that $H^\frac{1}{2}\xi_\om$ is cyclic for $\pi_\om(\ga)$. By von Neumann's bicommutant theorem, this is the same as 
showing that  $H^\frac{ 1}{2}\xi_\om$ is cyclic for $\mathcal{R}_\om= \pi_\om(\ga)''$. Set $T=H^\frac{1}{2}$ and note 
that $T$ is non-singular by virtue of Lemma \ref{lem:nonsingular}.\\
Let now $x$ be any vector in $\ch_\om$. For every $\varepsilon >0$, by cyclicity of
$\xi_\om$ there exists
$A$ in $\mathcal{R}_\om$ such that $\|A\xi_\om- x \|<\frac{\varepsilon}{2}$.
Thanks to Lemma \ref{lem:affiliation}, $T^{-1}$ is a (positive) self-adjoint operator affiliated with $\mathcal{R}_\om$ as it is in fact affiliated with the centraliser of $\om$. For every $n>0$, let $E_n$ be the spectral
projection of $T^{-1}$ corresponding to the interval $[0, n]$. Clearly, $E_n$ still sits in 
$\mathcal{R}_\om$, which means $B_n:=AE_nT^{-1}$ is a bounded operator that belongs to $\mathcal{R}_\om$.
Because $E_n$ strongly converges to the identity $I$ as $n$ goes to infinity, there must exist $n_0$ such that
$\|E_{n_0}\xi_\om- \xi_\om\|<\frac{\varepsilon}{2\|A\|}$.
But then we have
\begin{align*}
\|B_{n_0}T\xi_\om- x\|\leq&\,\|AE_{n_0}T^{-1}T\xi_\om- A\xi_\om\|+\|A\xi_\om-x\| \leq\\
 &\,\|A\|\|E_{n_0}\xi_\om- \xi_\om\|+\|A\xi_\om-x\| \leq \frac{\varepsilon}{2}+\frac{\varepsilon}{2}=\varepsilon\,,
\end{align*}
which ends the proof as  $\varepsilon$ is arbitrary.
\end{proof}

\section{Examples}
The hypothesis of central support in our main result cannot be dispensed with, as shown by the following class of counterexamples.
\begin{example}
\label{exa:support}
\emph{Let $G$ be any (non-abelian) compact group, and let $U: G\rightarrow \cu(\mathbb{C}^n)$ be a continuous unitary representation, which we will
assume irreducible and with $n\geq 2$. For every $g\in G$, let $\a_g\in{\rm Aut(M_n(\mathbb{C}))}$ be the inner
automorphism implemented by $U(g)$, to wit $\a_g(T):= U_g T U_g^*$, $T\in M_n(\mathbb{C})$.\\
Irreducibility implies that the normalized trace $\varphi$ is the only $G$-invariant state. Indeed, any state $\om$
on $M_n(\mathbb{C})$ is of the form $\om(A)= {\rm Tr} (AT)$, $A\in M_n(\mathbb{C})$, for some positive
$T$ with ${\rm Tr}(T)=1$. $G$-invariance means ${\rm Tr}(AT)={\rm Tr} (U_gAU_g^*T)={\rm Tr} (AU_g^*T U_g) $, hence ${\rm Tr}(A(T-U_g^*TU_g))=0$ for all $A\in M_n(\mathbb{C})$, $g\in G$. By the faithfulness of the trace, we find
$T=U_gTU_g^*$ for all $g\in G$, which means $T=\lambda I$ as the representation is irreducible. Finally, the condition
${\rm Tr}(T)=1$ forces $\lambda$ to be equal to $\frac{1}{n}$, hence $\om$ is the normalized trace on $M_n(\mathbb{C})$.\\
On the other hand, for any state $\om$ on $M_n(\mathbb{C})$ one certainly has $\pi_\om\cong \pi_{\om\circ\a_g}$ for all $g\in G$, merely because the automorphisms $\a_g$ are  inner. This means that any state is automatically 
quasi-invariant. If now $\om_x$ is any vector state, that is $\om_x(A):= \langle Ax, x \rangle $, $A\in M_n(\mathbb{C})$, for some $x\in \mathbb{C}^n$, then $\pi_{\om_x}$ is irreducible. However, $\pi_\varphi$ is not irreducible, being the direct sum of $n$ copies of the identical representation of $\mathbb{M}_n(\mathbb{C})$. This shows that the conclusion of Theorem \ref{thm:main} fails to hold. The reason is that $\om_x$ does not have central support.}
\end{example}

\section*{Acknowledgments}
\noindent
The author acknowledges the support of the Italian INDAM-GNAMPA, the Italian PNRR MUR project PE0000023-NQSTI, CUP H93C22000670006 and Progetto ERC SEEDS UNIBA ``$C^*$-algebras and von Neumann algebras in Quantum Probability'', CUP H93C23000710001.

\end{document}